% ----------------------------------------------------------------
% AMS-LaTeX Paper ************************************************
% **** -----------------------------------------------------------
\documentclass{amsart}
\usepackage{graphicx}
% ----------------------------------------------------------------
\vfuzz2pt % Don't report over-full v-boxes if over-edge is small
\hfuzz2pt % Don't report over-full h-boxes if over-edge is small
% THEOREMS -------------------------------------------------------
\newtheorem{thm}{Theorem}[section]

\theoremstyle{definition}

\theoremstyle{remark}
\newtheorem{rem}[thm]{Remark}
\numberwithin{equation}{section}
% MATH -----------------------------------------------------------

% ----------------------------------------------------------------
\begin{document}

\title[Gamma and polygamma functions]{“Inequalities involving the gamma and digamma functions}%
\author{necdet batir}%
\address{department of  mathematics, nev{\c{s}}ehir hbv university, nev{\c{s}}ehir, 50300 turkey}%
\email{nbatir@hotmail.com}%

%\thanks{}%
\subjclass[2000]{Primary:  33B05; Secondary: 26A48, 26D07.}
\keywords{Gamma function, digamma function, polygamma function, Euler-Mascheroni constant.}%

\date{March 23,2018}%
% ----------------------------------------------------------------
\begin{abstract}
We improve the upper bounds of the following inequalities proved in [H. Alzer and N. Batir, Monotonicity properties of the gamma function, Appl. Math. Letters,
20(2007), 778-781].
\begin{equation*}
exp\left(-\frac{1}{2}\psi\left(x+\frac{1}{3}\right)\right)<\frac{\Gamma(x)}{\sqrt{2\pi}x^xe^{-x}}<exp\left(-\frac{1}{2}\psi(x)\right),
\end{equation*}
and
$$
\frac{1}{2}\psi'(x+1/3))<\log x-\psi(x)<\frac{1}{2}\psi'(x).
$$
Here $\Gamma$ is the classical gamma function and $\psi$ is the digamma function.
\end{abstract}
\maketitle
% ----------------------------------------------------------------
\section{Introduction} As it is well known the gamma function is defined by the improper integral
\begin{equation*}
\Gamma(z)=\int_{0}^{\infty}t^{z-1}e^{-t}dt
\end{equation*}
for all complex numbers $z$ except the non-positive integers, and privides an extention of  the factorial function. It is well known that it is one of the most important special functions and has very important applications in probability theory, combinatorics, statistical and quantum mechanics, number theory, and nuclear physics. It satisfies the fundamental functional equation $\Gamma(z+1)=z\Gamma(z)$. The Weierstrass' product form of it is given by
\begin{equation}\label{e:1}
\frac{1}{\Gamma(z)}=ze^{\gamma z}\prod_{n=1}^{\infty}\left(1+\frac{z}{n}\right)e^{-z/n},
\end{equation}
where
$$
\gamma=\lim\limits_{n\to\infty}\left(1+\frac{1}{2}+\frac{1}{3}+\cdots+\frac{1}{n}-\log n\right)=0.57721...
$$
is the Euler-Mascheroni constant, see \cite[pg. 346]{9}. The most important function related to the gamma function is the digamma or psi functin, which is defined by the logarithmic derivative of $\Gamma$, that is, $\psi(x)=\Gamma'(x)/\Gamma(x)$, $x>0$. Furthermore, the derivatives $\psi', \psi'',...,$ are called polygamma functions. The digamma and polygamma functions are also very important special functins and they have important applications in mathematics and other disciplines such as physics and statistics. They are also connected to  generalized harmonic numbers and  many other special functions such as the Riemann zeta, Hurwitz zeta and Clausen functions.  Over the decades, many mathematicians studied these functions and they obtained remarkable inequalities, interesting properties including monotonicity and convexity, please see [2, 4, 7, 8, 12-22] and references therein.  Taking the logarithm of both sides of  identity (\ref{e:1}), we obtain for $x>-1$
\begin{equation}\label{e:2}
\log \Gamma(x+1)=-\gamma x+\sum_{k=1}^{\infty}\bigg[\frac{x}{k}-\log (x+k)+\log k\bigg].  
\end{equation}
Differentiation gives
\begin{equation}\label{e:3}
\psi(x+1)=-\gamma+\sum_{k=1}^{\infty}\bigg[\frac{1}{k}-\frac{1}{k+x}\bigg].
\end{equation}

In \cite{3} the authors proved that the function
$$
G_c(x)=\log \Gamma(x)-x\log x+x-\frac{1}{2}\log (2\pi)+\frac{1}{2}\psi(x+c)
$$
is completely monotonic on $(0,\infty)$ if and only if $c\geq1/3$; while $-G_b(x)$ is completely monotonic on $(0,\infty)$ if and only if $b=0$. This result has many conclusions. For example,
$G_{1/3}(x)>0$ and $G_0(x)<0$ lead to
\begin{equation}\label{e:4}
exp\left(-\frac{1}{2}\psi\left(x+\frac{1}{3}\right)\right)<\frac{\Gamma(x)}{\sqrt{2\pi}x^xe^{-x}}<exp\left(-\frac{1}{2}\psi(x)\right).
\end{equation}
Similarly, $G_{1/3}'(x)<0$ and $G_0'(x)>0$ lead to
\begin{equation}\label{e:5}
\frac{1}{2}\psi'(x+1/3)<\log x-\psi(x)<\frac{1}{2}\psi'(x).
\end{equation}
In 2011 C. Mortici \cite{11} improved the upper and lower bounds given in (\ref{e:4}) and proved the following inequalities for $x\geq2$:
\begin{align}\label{e:6}
&exp\left(-\frac{1}{2}\psi\left(x+\frac{1}{3}\right)+\frac{1}{72x^2}\right)<\frac{\Gamma(x)}{\sqrt{2\pi}x^xe^{-x}}\notag\\
&<exp\left(-\frac{1}{2}\psi\left(x+\frac{1}{3}\right)+\frac{1}{72x^2}+\frac{11}{3240x^3}\right).
\end{align}
The lower bounds in (\ref{e:4}) and (\ref{e:5}) are very accurate but the same thing is not valid for the upper bounds. In the new paper \cite{5} the author improved the upper bound given in (\ref{e:4}) and proved for $x>0$ that
\begin{equation}\label{e:7}
exp\left(-\frac{1}{2}\psi(\delta_*(x))\right)<\frac{\Gamma(x)}{\sqrt{2\pi}x^xe^{-x}}<exp\left(-\frac{1}{2}\psi(\delta^*(x))\right),
\end{equation}
where
$$
\delta_*(x)=x+\frac{1}{3}\quad \mbox{and} \quad \delta^*(x)=\frac{1}{2}\frac{1}{(x+1)\log(1+1/x)-1}.
$$
In the same paper the author established the following inequalities
\begin{equation}\label{e:8}
exp\left(x\psi\left(\frac{x}{\log (x+1)}\right)\right)<\Gamma(x+1)<exp\left(x\psi\left(\frac{x}{2}+1\right)\right).
\end{equation}
The function $x \to \log(x)-\psi(x)$ in (\ref{e:5}) has attracted the attentions of  many mathematicians and they have offered bounds of different forms for it. For example, in \cite{10} the authors developed the following inequality for $x>1$:
\begin{equation}\label{e:9}
\frac{1}{2x}<\log x-\psi(x)<\frac{1}{x}.
\end{equation}
In \cite{1} the author extended it to $x>0$. Refinements of (\ref{e:9}) were given in \cite[Thm. 5]{6} as following:
\begin{equation*}
\frac{1}{2x}+\frac{1}{12(x+1/4)^2}<\log x-\psi(x)<\frac{1}{2x}+\frac{1}{12x^2}
\end{equation*}
and
\begin{equation*}
\frac{1}{2x}+\frac{1}{12x^2}-\frac{1}{12x^4}<\log x-\psi(x)<\frac{1}{2x}+\frac{1}{12x^2}-\frac{1}{120(x+1/8)^4}.
\end{equation*}
Our aim in this work is to improve the upper bounds given in (\ref{e:4}) and (\ref{e:5}). We also offer simplier lower bound in (\ref{e:8}).

In the proofs we need the following identities. Taking logarithm of both sides of identity (\ref{e:1}), we obtain for $x>-1$
\begin{equation}\label{e:10}
\log\Gamma(x+1)=-\gamma x+\sum\limits_{k=1}^\infty\left[\frac{x}{k}-\log(x+k)+\log k\right].
\end{equation}
Differentiation gives
\begin{equation}\label{e:11}
\psi(x+1)=-\gamma+\sum\limits_{k=1}^\infty\left[\frac{1}{k}-\frac{1}{k+x}\right] \quad x>-1.
\end{equation}
Differentiation successively gives for $n\in\mathbb{N}$ and $z\in\mathbb{C}$ , which is not a negative integer,
\begin{equation*}
\psi^{(n)}(z)=(-1)^{n-1}n!\sum_{k=0}^{\infty}\frac{1}{(z+k)^{n+1}}.
\end{equation*}
\section{Main results}
\begin{thm}For $x>0$ the following inequalties hold:
\begin{equation}\label{e:12}
\frac{1}{2}\psi'(\alpha(x))<\log x-\psi(x)<\frac{1}{2}\psi'(\beta(x)),
\end{equation}
\end{thm}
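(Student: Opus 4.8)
The plan is to convert the two-sided bound into a statement about a single, implicitly defined auxiliary function and then to sandwich that function between the explicit $\alpha$ and $\beta$. The polygamma series $\psi'(y)=\sum_{k=0}^{\infty}(y+k)^{-2}$ from the excerpt shows that $\psi'$ is strictly decreasing and maps $(0,\infty)$ continuously onto $(0,\infty)$; since $\log x-\psi(x)>0$ for $x>0$, there is a unique $v(x)\in(0,\infty)$ with
\begin{equation*}
\psi'\bigl(v(x)\bigr)=2\bigl(\log x-\psi(x)\bigr).
\end{equation*}
Because $\psi'$ is decreasing, $\tfrac12\psi'(\alpha(x))<\log x-\psi(x)$ is equivalent to $\alpha(x)>v(x)$, and $\log x-\psi(x)<\tfrac12\psi'(\beta(x))$ is equivalent to $v(x)>\beta(x)$. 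Thus (\ref{e:12}) is equivalent to the sandwich $\beta(x)<v(x)<\alpha(x)$ for all $x>0$, and I would organize everything around estimating $v$.

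For the lower bound I would observe that, if $\alpha(x)=x+\tfrac13$ (the natural choice, since only the upper bound is being improved), the inequality $v(x)<\alpha(x)$ is literally the left-hand inequality of (\ref{e:5}), namely $\tfrac12\psi'(x+\tfrac13)<\log x-\psi(x)$. This is already known: it is the derivative form of the complete monotonicity of $G_{1/3}$ proved in \cite{3}, since $G_{1/3}'(x)=\tfrac12\psi'(x+\tfrac13)-\bigl(\log x-\psi(x)\bigr)\le0$. So no new work is needed there, and the substance of the theorem lies entirely in the upper bound $\beta(x)<v(x)$.

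For the upper bound I would set
\begin{equation*}
\Phi(x)=\psi'\bigl(\beta(x)\bigr)-2\bigl(\log x-\psi(x)\bigr)
\end{equation*}
and prove $\Phi(x)>0$, which by the reduction above is exactly $\beta(x)<v(x)$. I would establish this by a monotonicity argument: differentiate to get $\Phi'(x)=\psi''(\beta(x))\,\beta'(x)-2\bigl(\tfrac1x-\psi'(x)\bigr)$, determine the sign of $\Phi'$, and then pin $\Phi$ down at an endpoint. Using the representation $\log x-\psi(x)=\int_x^\infty\bigl(\psi'(t)-\tfrac1t\bigr)\,dt$ together with the asymptotics $\psi'(y)\sim\tfrac1y+\tfrac1{2y^2}+\cdots$ and $\log x-\psi(x)\sim\tfrac1{2x}+\tfrac1{12x^2}+\cdots$, one checks that both $\beta(x)$ and $v(x)$ behave like $x+\tfrac13+o(1)$ as $x\to\infty$, so $\Phi(x)\to0$; the sign of $\Phi'$ should then force $\Phi>0$ throughout $(0,\infty)$.

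The hard part will be controlling the composite term $\psi''(\beta(x))\,\beta'(x)$ and thereby fixing the sign of $\Phi'$. Because $\beta$ is transcendental (it is built from $\log(1+1/x)$, in the spirit of $\delta^*$ in (\ref{e:7})), $\Phi'$ does not collapse to a rational expression, and one must clear the polygamma terms via their series/integral representations and reduce the claim to an elementary inequality in $x$ and $\log(1+1/x)$. Proving that elementary inequality for \emph{every} $x>0$ — rather than merely asymptotically — and carefully verifying the boundary behaviour as $x\to0^+$ (where $\log x-\psi(x)$ and $\psi'$ both blow up, making $\Phi$ an $\infty-\infty$ indeterminacy) is where I expect the real difficulty to lie.
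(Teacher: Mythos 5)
There is a genuine gap. Your reduction of (\ref{e:12}) to the sandwich $\beta(x)<v(x)<\alpha(x)$, where $v$ is defined by $\psi'(v(x))=2(\log x-\psi(x))$, is correct, and disposing of the lower bound by citing the complete monotonicity of $G_{1/3}$ from \cite{3} is legitimate (it is indeed just the left inequality of (\ref{e:5})). But the entire new content of the theorem is the upper bound $\beta(x)<v(x)$, and for that you only state a strategy: show $\Phi(x)=\psi'(\beta(x))-2(\log x-\psi(x))>0$ by computing $\Phi'$, ``determining its sign,'' and using $\Phi(x)\to0$ at infinity. The sign of $\Phi'$ is never determined; you yourself flag the composite term $\psi''(\beta(x))\,\beta'(x)$ as the hard part and leave it unresolved. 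Since $\Phi'$ mixes $\psi''$ evaluated at a transcendental argument with $1/x-\psi'(x)$, there is no evident way to settle its sign short of exactly the kind of global estimate the theorem itself asserts, and it is not even clear a priori that $\Phi$ is monotone. As written, the proof of the only nontrivial inequality is missing.

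The paper avoids this impasse by a different decomposition that you may want to compare with. Writing $\log x=\log\Gamma(x+1)-\log\Gamma(x)$ and expanding each term $\log(x+k)-\log(x+k-1)$ by Taylor's theorem with an exact second-order remainder produces the identity $\log x-\psi(x)=\tfrac12\sum_{k\ge1}(k+\tau(k))^{-2}$, where $\tau(k)$ is given in closed form by (\ref{e:17}). The point is that $\beta(x)=1+\tau(1)$ and $x+\tfrac13=\lim_{k\to\infty}(k+\tau(k))-k+1$, so both bounds of (\ref{e:12}) become a \emph{term-by-term} comparison of the series, and the whole theorem reduces to showing that $k\mapsto\tau(k)$ is increasing. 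That in turn is a single-variable elementary inequality, dispatched with Bernoulli's inequality $(1+t)^{2/3}<1+\tfrac{2t}{3}$. In other words, the paper never compares $\psi'(\beta(x))$ with $2(\log x-\psi(x))$ as two global quantities; it compares the two series summand by summand, which is what makes the argument close. To complete your route you would need to supply the missing elementary inequality controlling $\psi''(\beta(x))\,\beta'(x)$ on all of $(0,\infty)$, and I see no reason to expect that to be easier than the paper's term-by-term argument.
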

where
$$
\alpha(x)=x+\frac{1}{3}\quad \mbox{and}\quad \beta(x)=\frac{1}{\sqrt{2/x-2\log(1+1/x)}}.
$$
\begin{proof}If we use the functional equation $\Gamma(x+1)=x\Gamma(x)$, we get
\begin{equation*}
  \log x=\log \Gamma(x+1)-\log \Gamma(x).
\end{equation*}
Hence, if we employ (\ref{e:2}), we obtain
\begin{equation}\label{e:13}
\log x=-\gamma+\sum_{k=1}^{\infty}\bigg[\frac{1}{k}-\log (x+k)+\log (x+k-1)\bigg].
\end{equation}
From Taylor's theorem it follows that
\begin{equation}\label{e:14}
\log (x+k)-\log (x+k-1)=\frac{1}{x+k-1}-\frac{1}{2(k+\tau(k))^2},
\end{equation}
where $x-1<\tau(k)<x$ . Thus by the help of (\ref{e:14}), (\ref{e:13}) can be written as following:
\begin{equation}\label{e:15}
\log x=-\gamma+\sum_{k=1}^{\infty}\bigg[\frac{1}{k}-\frac{1}{x+k-1}+\frac{1}{2(k+\tau(k))^2}\bigg].
\end{equation}
Taking into account (\ref{e:11}), (\ref{e:15}) becomes
\begin{equation}\label{e:16}
\log x-\psi(x)=\frac{1}{2}\sum_{k=1}^{\infty}\frac{1}{(k+\tau(k))^2}.
\end{equation}
From (\ref{e:14}) we get
\begin{equation}\label{e:17}
\tau(k)=\bigg[\frac{2}{x+k-1}-2\log\bigg(1+\frac{1}{x+k-1}\bigg)\bigg]^{-\frac{1}{2}}-k.
\end{equation}
We shall show that $\tau$ is strictly increasing on $(1,\infty)$. For  this purpose we define
$$
f(u)=\bigg[2\bigg(\frac{1}{u}-\log\bigg(1+\frac{1}{u}\bigg)\bigg)\bigg]^{-\frac{1}{2}}-u.
$$
Differentiating $f$, after replacing $u$ by $1/t$, we get
$$
-\frac{1}{t^2}f'(t)=-\frac{t}{t+1}2^{-3/2}(h(t))^{-3/2}+\frac{1}{t^2},
$$
where $h(t)=t-\log(t+1)$. Therefore in order to show that  $\tau$ is strictly increasing, we only need to see
$$
\theta(t):=h(t)-\frac{t^2}{2(t+1)^{2/3}}<0.
$$
Differentiation gives \begin{align*}
\theta'(t)=&\frac{t}{t+1}-\frac{t(t+1)^{2/3}-t^2(t+1)^{-1/3}/3}{(t+1)^{4/3}}\\
&=\frac{t}{(t+1)^{5/3}}\bigg[(t+1)^{2/3}-1-\frac{2t}{3}\bigg]<0
\end{align*}
by the well known Bernoulli inequality $(1+x)^\delta<1+\delta x,\,0<\delta<1$ and $t>0$. Thus, we have  $\theta(t)>\theta(0)=0$, that is, $f$ is strictly increasing on $(0,\infty)$. Hence we conclude from (\ref{e:17}) that
\begin{equation}\label{e:18}
\frac{1}{2}\sum_{k=1}^{\infty}\frac{1}{(k+\tau(\infty))^2}<\log x-\psi(x)<\frac{1}{2}\sum_{k=1}^{\infty}\frac{1}{(k+\tau(1))^2}.
\end{equation}
From  (\ref{e:14}) we have
\begin{equation}\label{e:19}
\tau(1)=\bigg[\frac{2}{x}-2\log(1+1/x)\bigg]^{-\frac{1}{2}}-1.
\end{equation}
Applying L'Hospital rule it is not difficult to see that
\begin{equation}\label{e:20}
\tau(\infty)=\lim\limits_{k\to\infty}\tau(k)=\lim\limits_{t\to 0}f(1/t)+x-1=x+\frac{1}{3}-1.
\end{equation}
Using (\ref{e:19}) and (\ref{e:20}) the proof follows from (\ref{e:18}).
\end{proof}
The upper bound in (\ref{e:7}) is very accurate but it is not very useful in practise because of it structure. The following theorem gives much simpler upper bound, which clearly improves the upper bound of (\ref{e:12}), and has the advantages of simplicity.
\begin{thm} For $x\geq0$ we have
\begin{equation}\label{e:21}
\frac{1}{2}\psi'\left(x+\frac{1}{3}\right)<\log x-\psi(x)<\frac{1}{2}\psi'\left(x+\frac{1}{3}-\frac{1}{12x+3}\right).
\end{equation}
\end{thm}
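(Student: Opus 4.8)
The plan is to derive the theorem from the first theorem, that is, from the two-sided estimate (\ref{e:12}), instead of repeating the series analysis used to prove it. The lower bound in (\ref{e:21}) requires nothing new: since $\alpha(x)=x+\frac13$, it is verbatim the lower bound of (\ref{e:12}), already established. Thus only the upper bound is at issue, and the decisive fact is that $\psi'$ is strictly decreasing on $(0,\infty)$, because $\psi'(z)=\sum_{k=0}^{\infty}(z+k)^{-2}$. Writing $A(x)=x+\frac13-\frac{1}{12x+3}$ for the new argument, the upper bound of (\ref{e:12}) gives $\log x-\psi(x)<\frac12\psi'(\beta(x))$, so by monotonicity of $\psi'$ it suffices to prove the single inequality $A(x)\le\beta(x)$ for $x>0$: then $\psi'(A(x))\ge\psi'(\beta(x))$ and the claimed bound follows at once.

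The second step converts $A(x)\le\beta(x)$ into an elementary inequality. Both sides are positive and $\beta(x)^{-2}=\frac2x-2\log\!\left(1+\frac1x\right)$, so squaring reduces the claim to $\frac2x-2\log(1+\frac1x)\le A(x)^{-2}$. A short simplification gives the closed form $A(x)=\dfrac{x(12x+7)}{3(4x+1)}$, and the substitution $t=1/x$ (which sweeps out $(0,\infty)$) recasts everything as
\begin{equation*}
\log(1+t)\ge t-\frac{9t^{2}(t+4)^{2}}{2(7t+12)^{2}},\qquad t>0,
\end{equation*}
with equality at $t=0$. This inequality is what remains to be established.

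For the final step let $\Psi(t)$ denote the difference of the two sides. Expanding at $t=0$ shows that the rational function on the right matches $\log(1+t)$ through its cubic term, so $\Psi$ vanishes to fourth order at the origin, with $\Psi(0)=\Psi'(0)=0$ and positive leading coefficient. I expect $\Psi$ to be nondecreasing on $(0,\infty)$, and the plan is to prove $\Psi'(t)\ge0$ there. Differentiating and clearing the positive denominator $(1+t)(7t+12)^{3}$ turns $\Psi'(t)\ge0$ into the positivity of an explicit polynomial $N(t)$; since $\Psi'$ vanishes to third order at $0$, $N(t)$ carries the factor $t^{3}$, and the task reduces to checking that the remaining cofactor has nonnegative coefficients. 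This polynomial positivity check is the single genuinely computational point and is the main obstacle of the argument. Once it is in hand, $\Psi(t)=\int_{0}^{t}\Psi'(s)\,ds\ge0$ yields $A(x)\le\beta(x)$, and together with the first paragraph the proof is complete. (In particular this route exhibits the upper bound of (\ref{e:21}) as a consequence of the one in (\ref{e:12}); the point of the new bound is the simplicity of its explicit argument.)
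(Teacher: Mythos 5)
Your proposal is correct and follows essentially the same route as the paper: both deduce the result from Theorem 2.1 via the monotonicity of $\psi'$, reduce to the comparison $x+\frac13-\frac{1}{12x+3}<\beta(x)$, rewrite that as an elementary logarithmic inequality, and finish by showing the difference function is monotone with limit $0$ (your $\Psi(t)$ is exactly the paper's $H(x)$ under $t=1/x$, and your polynomial $N(t)=t^4(63t+188)$ is the substitution image of the paper's computed $H'(x)=-\frac{188x+63}{x^3(x+1)(12x+7)^3}$). The only slips are cosmetic: $\Psi$ actually vanishes to fifth order at $0$ and $N$ carries the factor $t^4$ rather than $t^3$, but neither affects the validity of the argument.
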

\begin{proof} Since $\psi'$ is strictly decreasing on $(0,\infty)$, in the light of  Theorem 2.1, it suffices to show for $x>0$ that
\begin{equation}\label{e:22}
\frac{1}{\sqrt{\frac{2}{x}-2\log(1+1/x)}}>x+\frac{1}{3}-\frac{1}{12x+3}
\end{equation}
or simplifying
$$
H(x):=\log(1+1/x)-\frac{1}{x}+\frac{1}{2\left(x+\frac{1}{3}-\frac{1}{12x+3}\right)^2}>0.
$$
Differentiation gives
$$
H'(x)=-\frac{63+188x}{x^3(x+1)(12x+7)^3},
$$
so that $H$ is strictly decreasing on $(0,\infty)$. This concludes with $H(x)>\lim_{x\to\infty}H(x)=0$.
\end{proof}
Our next theorem improves the upper bound given in (\ref{e:4}).
\begin{thm} For $x>0$ we have
\begin{equation*}
\exp\left(-\frac{1}{2}\psi\left(x+\frac{1}{3}\right)\right)<\frac{\Gamma(x)}{\sqrt{2\pi}x^xe^{-x}}<\exp\left(-\frac{1}{2}\psi\left(x+\frac{1}{3}-\frac{1}{18x+3}\right)\right).
\end{equation*}
\end{thm}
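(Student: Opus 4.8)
The plan is to pass to logarithms and to prove only the right-hand inequality, since the left-hand one is exactly the bound $G_{1/3}(x)>0$ of Alzer and Batir recorded in (\ref{e:4}). Writing $L(x)=\log\Gamma(x)-x\log x+x-\tfrac12\log(2\pi)$ and $\delta(x)=x+\tfrac13-\tfrac{1}{18x+3}$, the claimed upper bound is equivalent to $L(x)<-\tfrac12\psi(\delta(x))$, i.e. to $D(x):=L(x)+\tfrac12\psi(\delta(x))<0$ for $x>0$. The elementary identity that makes everything go is $L'(x)=\psi(x)-\log x$, which ties $D$ directly to the quantity $\log x-\psi(x)$ already controlled by Theorem 2.1 and Theorem 2.2.

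First I would settle the boundary behaviour. A Stirling expansion gives $L(x)=-\tfrac12\log x+\tfrac{1}{12x}+O(x^{-3})$, while expanding $\psi(\delta(x))$ shows that the $\log x$ and $x^{-1}$ terms cancel against $L$, leaving $D(x)=-\tfrac{1}{72x^2}+O(x^{-3})$; hence $D(\infty)=0$, and since $\delta(0)=0$ one checks $D(0^+)=-\infty$. It therefore suffices to prove that $D$ is strictly increasing on $(0,\infty)$, for then $D(x)<D(\infty)=0$.

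To establish $D'>0$ I would compute $D'(x)=-(\log x-\psi(x))+\tfrac12\psi'(\delta(x))\delta'(x)$ with $\delta'(x)=1+\tfrac{18}{(18x+3)^2}$, and bound the first term from above by the upper estimate of Theorem 2.2, namely $\log x-\psi(x)<\tfrac12\psi'\bigl(x+\tfrac13-\tfrac{1}{12x+3}\bigr)$. This reduces the whole theorem to the single comparison
\[
\psi'\bigl(x+\tfrac13-\tfrac{1}{18x+3}\bigr)\bigl(1+\tfrac{18}{(18x+3)^2}\bigr)\ \ge\ \psi'\bigl(x+\tfrac13-\tfrac{1}{12x+3}\bigr),\qquad x>0.\qquad(\star)
\]
If preferred, one may instead use the sharper bound $\tfrac12\psi'(\beta(x))$ of Theorem 2.1, which only decreases the right-hand side of $(\star)$ and hence leaves a little more room.

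I expect $(\star)$ to be the main obstacle, precisely because it is tight: the two sides agree asymptotically through order $x^{-2}$ (both equal $x^{-1}+\tfrac16x^{-2}+\cdots$), so no crude application of the monotonicity of $\psi'$ can succeed. The plan is to write the right-hand side as $\psi'(\delta(x))+\int_{\delta(x)-h(x)}^{\delta(x)}\bigl(-\psi''(t)\bigr)\,dt$, where $h(x)=\tfrac{1}{12x+3}-\tfrac{1}{18x+3}=\tfrac{2x}{3(4x+1)(6x+1)}$, and then to control this integral using the convexity of $\psi'$ together with the known sharp envelope $\tfrac1t+\tfrac1{2t^2}<\psi'(t)$ and a companion two-sided bound for $-\psi''(t)$. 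Arranged as in the proof of Theorem 2.2 (where the transcendental parts cancel and the derivative of the auxiliary function became the explicit rational expression $-\tfrac{63+188x}{x^3(x+1)(12x+7)^3}$), the aim is to make the transcendental contributions in $(\star)$ cancel and reduce it to the positivity of an explicit rational function of $x$, which can be verified directly; a short separate argument near $x=0$ may be required, since the envelopes are least accurate there.
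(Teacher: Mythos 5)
Your reduction is sound as far as it goes: with $L(x)=\log\Gamma(x)-x\log x+x-\tfrac12\log(2\pi)$ one indeed has $L'(x)=\psi(x)-\log x$, the limit $D(\infty)=0$ is correct, and combining $D'>0$ with Theorem 2.2 does reduce everything to your inequality $(\star)$. But $(\star)$ is precisely where the proof stops being a proof. You describe it yourself as "the main obstacle," note that the two sides agree through order $x^{-2}$, and then offer only a programme for attacking it (an integral representation of the difference, unspecified "companion two-sided bounds" for $-\psi''$, a hoped-for cancellation of transcendental terms, and a possible "short separate argument near $x=0$"). None of this is carried out, and it is not routine: the margin in $(\star)$ is only $O(x^{-3})$ at infinity, and near $x=0$ both arguments $x+\tfrac13-\tfrac1{18x+3}$ and $x+\tfrac13-\tfrac1{12x+3}$ collapse to $0$ at different linear rates, so any envelope for $\psi'$ must be uniformly sharp on all of $(0,\infty)$. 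As submitted, the proposal establishes the theorem only conditionally on $(\star)$.

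The gap is all the more avoidable because the paper's own argument is essentially one line of calculus. The sharp upper bound is already available in \eqref{e:7}: $\Gamma(x)/(\sqrt{2\pi}x^xe^{-x})<\exp\bigl(-\tfrac12\psi(\delta^*(x))\bigr)$ with $\delta^*(x)=\tfrac12\bigl((x+1)\log(1+1/x)-1\bigr)^{-1}$. Since $t\mapsto e^{-\psi(t)/2}$ is decreasing, the stated bound follows from the purely elementary comparison $\delta^*(x)>x+\tfrac13-\tfrac1{18x+3}$, which after clearing denominators becomes
\begin{equation*}
P(x):=\log\left(1+\frac1x\right)-\frac{1+12x+12x^2}{6x(x+1)(2x+1)}<0,
\end{equation*}
and $P'(x)=\tfrac{1}{6x^2(1+x)^2(1+2x)^2}>0$ together with $P(\infty)=0$ finishes it. So rather than re-deriving the gamma-function bound from scratch through a monotonicity argument that needs the delicate trigamma inequality $(\star)$, you should (as for the digamma bound in Theorem 2.2) exploit the already-proved sharp transcendental bound and only compare arguments of the monotone outer function. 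If you do want to pursue your route, you must actually prove $(\star)$; until then the argument is incomplete.
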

\begin{proof} Since the function $e^{-\psi(x)}$ is strictly decreasing on $(0,\infty)$, in the light of (\ref{e:7}), the only thing we need to show for $x>0$ is the following inequality:
\begin{equation*}
\frac{1}{2}\frac{1}{(x+1)\log(1+1/x)-1}>x+\frac{1}{3}-\frac{1}{18x+3}
\end{equation*}
or equivalently
\begin{equation*}
P(x):=\log(1+1/x)-\frac{1+12x+12x^2}{6x(x+1)(2x+1)}<0.
\end{equation*}
Differentiation gives
\begin{equation*}
P'(x)=\frac{1}{6x^2(1+x)^2(1+2x)^2}>0.
\end{equation*}
Thus, $P$ is strictly increasing on $(0,\infty)$, which implies that $P(x)<\lim_{x\to\infty}P(x)=0$.
\end{proof}
\begin{thm} For all $x>0$, we have
\begin{equation}\label{e:23}
\exp\left(x\psi\left(\frac{x}{2}+1-\frac{x^2}{12+2x}\right)\right)<\Gamma(x+1)<\exp\left(x\psi\left(\frac{x}{2}+1\right)\right).
\end{equation}
\end{thm}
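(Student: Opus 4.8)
The plan is to keep the upper bound and supply only a new, simpler lower bound. The right-hand inequality $\Gamma(x+1) < \exp\big(x\psi(\tfrac{x}{2}+1)\big)$ is precisely the upper bound already recorded in (\ref{e:8}), so it requires no further work. For the lower bound I would lean on the \emph{lower} bound of (\ref{e:8}) together with the monotonicity of $\psi$: since $\psi'(x) = \sum_{k\geq 0}(x+k)^{-2} > 0$, the digamma function is strictly increasing on $(0,\infty)$, and because $x > 0$ the composite map $t \mapsto \exp(x\psi(t))$ is strictly increasing as well. Hence it suffices to prove that the new argument sits below the old one, namely
\[
\frac{x}{2}+1-\frac{x^2}{12+2x} < \frac{x}{\log(x+1)} \qquad (x>0);
\]
applying $t\mapsto \exp(x\psi(t))$ and chaining with (\ref{e:8}) then yields $\exp\big(x\psi(\tfrac{x}{2}+1-\tfrac{x^2}{12+2x})\big) < \exp\big(x\psi(\tfrac{x}{\log(x+1)})\big) < \Gamma(x+1)$.

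Next I would simplify the reduced inequality. Putting the left-hand side over the common denominator $2(x+6)$ collapses it to $\frac{4x+6}{x+6}$, which is positive for $x>0$ (so the argument of $\psi$ stays in the domain). Since $x+6$ and $\log(x+1)$ are both positive on $(0,\infty)$, cross-multiplication turns the target into the equivalent one-variable estimate
\[
R(x) := \log(x+1) - \frac{x(x+6)}{2(2x+3)} < 0,
\]
a logarithm-versus-rational comparison of exactly the same type as those appearing in the proofs of Theorems 2.2 and 2.3.

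To close the argument I would differentiate $R$. I expect the derivative to telescope to a single clean expression, namely
\[
R'(x) = \frac{-x^3}{(x+1)(2x+3)^2},
\]
which is strictly negative on $(0,\infty)$. Combined with the boundary value $R(0) = \log 1 - 0 = 0$, this forces $R(x) < 0$ for every $x > 0$, completing the reduction and hence the theorem.

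The main obstacle is the algebra hidden in this last step: after combining $\frac{1}{x+1}$ with the derivative of $\frac{x(x+6)}{2(2x+3)}$, one must check that the numerator collapses to the pure cubic $-x^3$, with all quadratic and linear terms cancelling. This exact cancellation is what makes the sign of $R'$ transparent; verifying it carefully is the only genuinely delicate point, since everything else is the familiar ``monotone function with vanishing boundary value'' scheme used throughout the paper.
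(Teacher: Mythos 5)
Your proposal is correct and follows essentially the same route as the paper: keep the upper bound of (\ref{e:8}), and deduce the new lower bound from the lower bound of (\ref{e:8}) via the monotonicity of $t\mapsto\exp(x\psi(t))$, reducing everything to the comparison $\frac{x}{2}+1-\frac{x^2}{12+2x}<\frac{x}{\log(x+1)}$. In fact your computations are the accurate version of the paper's: the left-hand side does collapse to $\frac{4x+6}{x+6}$, the reduced inequality is indeed $\log(x+1)-\frac{x(x+6)}{2(2x+3)}<0$ (the paper writes the same function $p$ but asserts $p(x)>0$ with $\lim_{x\to\infty}p(x)=0$, which cannot be right since $p(x)\to-\infty$), the derivative's numerator is the pure cubic $-x^3$ rather than the paper's $-x^2$, and the correct boundary anchor is $p(0)=0$ with $p$ strictly decreasing, exactly as you argue. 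So your write-up both matches the paper's strategy and repairs the sign and limit errors in its execution.
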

\begin{proof}Since the function $\psi$ is an increasing function, taking into account (\ref{e:8}), it suffices to see

\begin{equation}\label{e:24}
\frac{x}{2}+1-\frac{x^2}{12+2x}<\frac{x}{\log(x+1)}
\end{equation}
or after a little simplification,
\begin{equation}\label{e:25}
p(x):=\log(x+1)-\frac{x^2+6x}{4x+6}>0.
\end{equation}
Differentiating, we get
$$
p'(x)=-\frac{x^2}{(x+1)(3+2x)^2},
$$
so that $p$ is strictly decreasing on $(0,\infty)$. Thus we have $p(x)>\lim_{x\to\infty}p(x)=0$.
\end{proof}
\begin{rem}
The upper bounds in (\ref{e:6}) and (\ref{e:9}) are slightly better than those of (\ref{e:14}) and (\ref{e:16}), but the upper bounds in (\ref{e:14}) and (\ref{e:16}) have the advantage of simplicity.
\end{rem}

% ----------------------------------------------------------------
\bibliographystyle{amsplain}

\end{document}